\def\Box{\vcenter{\vbox{\hrule\hbox{\vrule
     \vbox to 8.8pt{\hbox to 10pt{}\vfill}\vrule}\hrule}}}
 \newcommand{\bZ}{\mathbb Z}
\newcommand{\bQ}{\mathbb Q} \newcommand{\bC}{\mathbb C}
\newcommand{\bF}{\mathbb F}
 \newcommand{\cN}{\mathcal N}
\newcommand{\cP}{\mathcal P} \newcommand{\cS}{\mathcal S}
\def\Tr{{\rm Tr}}
\def\la{{\langle}} \def\ra{{\rangle}}
\newtheorem{thm}{Theorem}[section]
\newtheorem{cor}{Corollary}[section]
\newtheorem{definition}{Definition}[section]
\newtheorem{remark}{Remark}[section]
\newtheorem{proposition}{Proposition}[section]
\begin{document}
\newcommand{\stopthm}{\begin{flushright}
		\(\box \;\;\;\;\;\;\;\;\;\; \)
\end{flushright}}

\newcommand{\symfont}{\fam \mathfam}

\title{On Constructions and Enumeration of Vectorial Hyper-bent Functions in the $\cP\cS_{ap}^{\#}$ Class}

\date{}
\author[add1]{Jingkun Zhou\corref{cor1}}\ead{jingkunz@zju.edu.cn}\cortext[cor1]{Corresponding author}
\address[add1]{School of Mathematical Sciences, Zhejiang University, Hangzhou 310027, Zhejiang, P.R. China}
\author[add2]{Chunming Tang}\ead{tangchunmingmath@163.com}
\address[add2]{School of Information Science and Technology, Southwest Jiaotong University, Chengdu 610031, China}
\author[add3,add4]{Fengrong Zhang}\ead{zhfl203@163.com}
\address[add3]{State Key Laboratory of Integrated Services Networks, Xidian University, Xian 710071, People's Republic of China}
\address[add4]{School of Computer Science and Technology, China University of Mining and Technology, Xuzhou 221116, Jiangsu, People's Republic of China}
\begin{abstract}

The purpose of this paper is to give explicit constructions of vectorial hyper-bent functions in the $\cP\cS_{ap}^{\#}$ class. It seems that the explicit
constructions were so far known only for very special cases.  To this end, we
present a sufficient and necessary condition of this family of vectorial functions to be hyper-bent.  The conditions are expressed in terms of group ring. Using this characterization, explicit constructions of vectorial hyper-bent functions of the $\cP\cS_{ap}^{\#}$ class
via balanced functions are proposed.  Furthermore,
exact number of vectorial hyper-bent functions in the $\cP\cS_{ap}^{\#}$ class is found.
  The results improve some previous work. Moreover, we solve a problem of counting vectorial hyper-bent functions left by Muratovi\'c-Ribi\'c, Pasalic and Ribi\'c in [{\em IEEE Trans. Inform. Theory}, 60 (2014), pp. 4408-4413].
	\newline
	
	\noindent\text{Keywords:}  Boolean function, Bent function, Hyper-bent function, Vectorial function, Maximum nonlinearity.
	
	%\noindent\text{Mathematics Subject Classification (2010)}: 94A60 11T71
\end{abstract}	

\maketitle

\section{Introduction}

A hyper-bent function, firstly introduced by A.M.Youssef and G. Gong \cite{Youssef2001} in 2001, is a Boolean function $f:\;\bF_{2^n}\rightarrow\bF_2$ such that its extended Walsh-Hadamard transform
$$\widehat{\chi}_f(\lambda,t)=\sum_{x\in\bF_{2^n}}(-1)^{f(x)+\Tr_1^n(\lambda x^t)}$$
only take the values $\pm2^{\frac{n}{2}}$, where $\lambda\in\bF_{2^n}$ and $t$ is an integer coprime with $2^n-1$. Hyper-bent functions are defined as a special class of bent functions for the purpose of avoiding approximation by a bijective monomial function. The class of hyper-bent functions proposed in \cite{Youssef2001} belong to the $\cP\cS_{ap}$ class of bent functions introduced by Dillon \cite{Dillon1974} and is the only known infinite class of hyper-bent functions up to now.

The hyper-bent property of Boolean functions can be extended to vectorial functions. For a vectorial function $F:\;\bF_{2^n}\rightarrow\bF_2^k$, we say $F$ is hyper-bent if all nonzero combinations of the component functions of $F$ are hyper-bent. That is, a vectorial function $$F(x)=(f_1(x),\dots,f_k(x))$$
is called hyper-bent if $a_1f_1(x)+\cdots+a_kf_k(x)$ is a hyper-bent function for any choice of $a_i\in\bF_2$, where not all of the $a_i$'s are zero.

The trace functions are useful tools for the study of bent functions. Carlet and Gaborit \cite{Carlet2006} have shown that hyper-bent Boolean functions of the $\cP\cS_{ap}^{\#}$ class are of the form $$f(x)=\Tr^{2m}_1\left (\sum_{i=1}^{2^m}a_ix^{i(2^m-1)}+a_0 \right ),$$
where $a_i\in\bF_{2^n}$. Charpin and Gong \cite{Charpin2008} gave a characterization of hyper-bent Boolean functions on $\bF_{2^{2m}}$ of the form $\sum_{r\in R}\Tr_1^{2m}(a_rx^{r(2^m-1)})$ in terms of Dickson polynomials and Kloosterman sums, where $a_r \in \bF_{2^m}$. See paper \cite{Mesnager2013},\cite{Flori2013} for recent progress on hyper-bent Boolean functions with Dillon-like exponents. By employing the M\"obius transformation, Carlet et al.
presented a characterization for the hyper-bentness property of functions with Dillon-like exponents with coefficients in the whole $\bF_{2^{2m}}$ \cite{CHMT}.

In \cite{Pasalic,Lapierre2016} some necessary conditions of vectorial bent functions with Dillon-like exponents are given. In \cite{Muratovic2014}, the authors considered the vectorial case and obtained a sufficient and necessary condition for the function $F(x)=\Tr_m^{2m}(\sum_{i=1}^{2^m}a_ix^{i(2^m-1)})$ to be a vectorial bent function. They also showed that each vectorial bent function of this form is vectorial hyper-bent. Besides, the authors counted the exact number of vectorial hyper-bent functions of the form $F(x)=\Tr_m^{2m}(\sum_{i=1}^{2^m}a_ix^{i(2^m-1)}+c)$.  They also left the question about the cardinality of
vectorial hyper-bent functions for a general case $k|n$ as an
open problem.

In this paper, we study the hyper-bent property of a family of vectorial functions $F:\;\bF_{2^{2m}}\rightarrow\bF_2^k$ such that $f(\gamma^{2^m+1} x)=f(x)$ hold for each nonzero combination $f$ of the component functions of $F$ and $F(0)=0$. The notion of vectorial hyper-bent functions under this condition is a generalization of hyper-bent Boolean functions of the $\cP\cS_{ap}^{\#}$ class. We attain a sufficient and necessary condition of this family of vectorial functions to be hyper-bent. A numerical result for vectorial hyper-bent functions of a typical form is given, which solve the open problem on counting vectorial hyper-bent functions in \cite{Muratovic2014},and a construction of vectorial hyper-bent functions of arbitrary dimension is obtained.

The rest of this paper is organized as follows. In Section 2, we present some preliminaries on group rings and vectorial hyper-bent functions. In Section 3, we establish a sufficient and necessary condition for a class of vectorial functions to be hyper-bent. The number of vectorial hyper-bent functions of a typical form is counted and an explicit construction of vectorial hyper-bent functions is given by using balanced functions. And finally Section 4 concludes the paper.

\section{Preliminaries}
\subsection{Group ring and Fourier analysis}
In this section we introduce some basic results on group rings.
\begin{definition}
Let $G$ be an abelian group. The group ring $\bQ[G]$ is defined as the set of the formal sums of elements of $G$ with coefficients in $\bQ$. The addition, the scalar multiplication and the multiplication in $\bQ[G]$ are respectively defined as follows:
$$\begin{aligned}\sum_{g\in G}a_g\cdot g+\sum_{g\in G}b_g\cdot g&=\sum_{g\in G}(a_g+b_g)\cdot g,\\
                     a\sum_{g\in G}a_g\cdot g&=\sum_{g\in G}(aa_g)\cdot g,
\end{aligned}$$
and
$$\quad \left (\sum_{g\in G}a_g\cdot g \right )\cdot \left (\sum_{g\in G}b_g\cdot g \right )=\sum_{g\in G} \left (\sum_{h\in G}a_hb_{gh^{-1}} \right )\cdot g.$$
\end{definition}

It becomes conventional to abuse the notation $S$ as a subset of $G$ and the corresponding element $\sum_{s\in S}s$ in $\bQ[G]$ at the same time. A character $\chi$ of an abelian group $G$ is a group homomorphism from $G$ to the multiplicative group of the complex field $\bC$. Denote by $\hat{G}$ the set of all the characters of $G$. Let $A=\sum_{g\in G}a_g\cdot g\in\bQ[G]$. The character sum of $\chi$ on $A$ is $\chi(A)=\sum_{g\in G}a_g\chi(g)$. The following inversion formula tells that two elements in $\bQ[G]$ coincide if the values of character sums on them equal for each character.
\begin{proposition}\label{inv for}
Suppose $A=\sum_{g\in G}a_g g$ is an element of the group ring $\bQ[G]$ for a finite abelian group $G$, then the coefficients $a_g$'s of $A$ can be computed explicitly by
$$a_g=\frac{1}{|G|}\sum_{\chi\in\hat{G}}\chi(A)\chi(g^{-1}),$$
where $\hat{G}$ denotes the character group of $G$. In particular, if $A,B\in\bQ[G]$ satisfy $\chi(A)=\chi(B)$ for all characters $\chi\in\hat{G}$, then $A=B$.
\end{proposition}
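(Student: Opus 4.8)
The plan is to verify the inversion formula directly, by substituting the definition of the character sum into the right-hand side and then invoking the orthogonality relations for the characters of a finite abelian group. First I would expand $\chi(A)=\sum_{h\in G}a_h\chi(h)$ and insert this into the proposed expression $\frac{1}{|G|}\sum_{\chi\in\hat{G}}\chi(A)\chi(g^{-1})$. Since both $G$ and $\hat{G}$ are finite, the order of summation over $h$ and over $\chi$ may be interchanged freely, and using the homomorphism property $\chi(h)\chi(g^{-1})=\chi(hg^{-1})$ the whole expression collapses to
\[
\frac{1}{|G|}\sum_{h\in G}a_h\sum_{\chi\in\hat{G}}\chi(hg^{-1}).
\]

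The crux of the argument is then the orthogonality relation for the inner sum, namely that $\sum_{\chi\in\hat{G}}\chi(x)$ equals $|G|$ when $x$ is the identity $e$ of $G$ and vanishes otherwise. The case $x=e$ is immediate, since $\chi(e)=1$ for every $\chi$ and there are exactly $|\hat{G}|=|G|$ characters. For $x\neq e$ the standard trick is to pick a character $\psi$ with $\psi(x)\neq 1$; as $\chi$ ranges over the group $\hat{G}$ so does $\psi\chi$, whence $\psi(x)\sum_{\chi}\chi(x)=\sum_{\chi}(\psi\chi)(x)=\sum_{\chi}\chi(x)$, forcing $(\psi(x)-1)\sum_{\chi}\chi(x)=0$ and therefore $\sum_{\chi}\chi(x)=0$. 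Applying this with $x=hg^{-1}$ isolates the single term $h=g$, and the remaining factor of $|G|$ cancels the prefactor $\frac{1}{|G|}$, leaving exactly $a_g$. This establishes the formula.

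The second assertion follows formally: if $\chi(A)=\chi(B)$ for every $\chi\in\hat{G}$, then the formula computes each coefficient of $A$ to equal the corresponding coefficient of $B$, so $A=B$ as elements of $\bQ[G]$.

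The only genuine obstacle is the existence, for each $x\neq e$, of a separating character $\psi$ with $\psi(x)\neq 1$, together with the count $|\hat{G}|=|G|$; both rest on the fact that a finite abelian group is self-dual, i.e.\ $\hat{G}\cong G$, which I would cite from the structure theory of finite abelian groups rather than reprove. Everything else is a routine interchange of finite sums.
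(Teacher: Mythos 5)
Your proof is correct: expanding $\chi(A)$, interchanging the finite sums, and applying the orthogonality relation $\sum_{\chi\in\hat{G}}\chi(x)=|G|\,[x=e]$ (proved via a separating character) is the canonical argument for Fourier inversion on a finite abelian group, and the ``in particular'' clause follows formally as you say. The paper states this proposition without proof, treating it as standard background, so there is nothing to compare against; your write-up supplies exactly the expected argument, and citing the structure theory of finite abelian groups for $|\hat{G}|=|G|$ and the existence of a character with $\psi(x)\neq 1$ is entirely reasonable.
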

Since the field $\bF_{2^k}$ are identical to $\bF_2^k$ as a vector space over $\bF_2$, there are two main approaches for describing the set of all characters of an elementary abelian group, one using the dot product and the other using the trace function.
\begin{proposition}\label{char}
Let $k$ be a positive integer.
\\(1) For each $a=(a_1,\dots,a_k)\in\bF_2^k$, define the function $\chi_a:\;\bF_2^k\rightarrow\{\pm1\}$ by
$$\chi_a(x)=(-1)^{\sum_{i=1}a_ix_i}=(-1)^{\la a,x\ra}$$
for each $x=(x_1,\dots,x_k)\in\bF_2^k$, where $\la a,x\ra$ is the usual dot product. Then
$$\widehat{\bF_2^k}=\{\chi_a:a\in\bF_2^k\}.$$
\\(2) For each $a\in\bF_{2^k}$, define the function $\rho_a:\bF_{2^k}\rightarrow\{\pm1\}$ by
$$\rho_a(x)=(-1)^{\Tr_1^k(ax)}$$
for each $x\in\bF_{2^k}$. Then the set $\{\rho_a:a\in\bF_{2^k}\}$ comprises all the characters of $G$, where $G$ is the additive group of $\bF_{2^k}$.
\end{proposition}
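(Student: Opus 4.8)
The plan is to treat both parts with a single three-step template, since in each case we exhibit a family of $2^k$ functions and must show it is \emph{exactly} the full character group $\widehat{G}$ of an abelian group $G$ of order $2^k$.

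First I would check that every proposed function is genuinely a character, i.e.\ a homomorphism into $\{\pm1\}\subset\bC$. For part (1) this is immediate from additivity of the dot product: since $\la a,x+y\ra=\la a,x\ra+\la a,y\ra$ in $\bF_2$, we get $\chi_a(x+y)=(-1)^{\la a,x+y\ra}=\chi_a(x)\chi_a(y)$. For part (2) the identical argument uses additivity of the trace, $\Tr_1^k(a(x+y))=\Tr_1^k(ax)+\Tr_1^k(ay)$, so that $\rho_a(x+y)=\rho_a(x)\rho_a(y)$.

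Second I would show that the maps in each family are pairwise distinct, so that each family really contributes $2^k$ elements. For part (1), if $a\neq b$ choose a coordinate $i$ with $a_i\neq b_i$ and evaluate at the standard basis vector $e_i$, giving $\chi_a(e_i)=(-1)^{a_i}\neq(-1)^{b_i}=\chi_b(e_i)$. For part (2), since $\rho_a\rho_b=\rho_{a+b}$ and $a+b\neq0$ exactly when $a\neq b$, it suffices to show that $\rho_c$ is nontrivial whenever $c\neq0$. Here I would use that the additive polynomial $\Tr_1^k(x)=x+x^2+\cdots+x^{2^{k-1}}$ has degree $2^{k-1}<2^k$, so it cannot vanish identically on $\bF_{2^k}$; hence $\Tr_1^k$ is onto $\bF_2$. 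For $c\neq0$ multiplication by $c$ permutes $\bF_{2^k}$, so $x\mapsto\Tr_1^k(cx)$ is also onto, yielding some $x_0$ with $\Tr_1^k(cx_0)=1$ and therefore $\rho_c(x_0)=-1$. Thus $a\mapsto\rho_a$ is injective.

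Finally I would invoke the standard fact that a finite abelian group $G$ has exactly $|G|$ characters (the same counting that underlies the inversion formula of Proposition \ref{inv for}). Since $|\bF_2^k|=|\bF_{2^k}|=2^k$ and each family above consists of $2^k$ distinct characters, in both cases the family must exhaust $\widehat{G}$, which is the assertion. The only genuinely nontrivial input is the non-vanishing (equivalently, surjectivity) of the trace used in part (2); every remaining step is a routine verification.
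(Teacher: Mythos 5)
Your proof is correct and complete: the homomorphism check, the injectivity of $a\mapsto\chi_a$ and $a\mapsto\rho_a$ (the latter via surjectivity of the trace, which you justify properly by the degree bound on $x+x^2+\cdots+x^{2^{k-1}}$), and the count $|\widehat{G}|=|G|$ together give exactly the claim. The paper states this proposition without proof, treating it as a standard fact, so there is nothing to compare against; your argument is the canonical one and fills that gap correctly.
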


\subsection{Hyper-bent functions}
We establish two useful propositions which will be utilized in Section 3. Suppose that $n=2m$ is an even positive integer. We have a straightforward partition of $\bF_{2^n}^*$ as follows:
$$\bF_{2^n}^*=\bigcup_{u\in U}u\bF_{2^m}^*,$$
where $U$ is the cyclic subgroup of $\bF_{2^n}^*$ of order $2^m+1$.

The following proposition is well known, and it can be found in a
slightly different form in \cite{Mesnager}. We give its proof here for the sake of completeness.
\begin{proposition}\label{psap}
Let $\gamma$ be a primitive element of $\bF_{2^n}$. Let $f$ be a Boolean function defined on $\bF_{2^n}$ such that
\begin{equation}\label{equ1}
f(\gamma^{2^m+1}x)=f(x)
\end{equation}
for every $x\in\bF_{2^n}$ and $f(0)=0$. Then $f$ is a hyper-bent function if and only if
$$\sum_{u\in U}(-1)^{f(u)}=1,$$
In this case $f$ is said to belong to the $\cP\cS_{ap}^{\#}$ class.
\end{proposition}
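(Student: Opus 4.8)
The plan is to evaluate the extended Walsh--Hadamard transform $\widehat{\chi}_f(\lambda,t)$ directly, exploiting the fact that condition \eqref{equ1} forces $f$ to be constant on the cosets $u\bF_{2^m}^*$. First I would observe that $\gamma^{2^m+1}$ has order $2^m-1$ and hence generates $\bF_{2^m}^*$, so \eqref{equ1} says exactly that $f(\alpha x)=f(x)$ for all $\alpha\in\bF_{2^m}^*$. Since $|U|\,|\bF_{2^m}^*|=2^n-1$ and $\gcd(2^m+1,2^m-1)=1$, the stated decomposition $\bF_{2^n}^*=\bigcup_{u\in U}u\bF_{2^m}^*$ is a partition into cosets on which $f$ is constant. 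Separating the term $x=0$, which contributes $(-1)^{f(0)}=1$, and writing each nonzero $x$ uniquely as $uv$ with $u\in U$, $v\in\bF_{2^m}^*$, I obtain
$$\widehat{\chi}_f(\lambda,t)=1+\sum_{u\in U}(-1)^{f(u)}\sum_{v\in\bF_{2^m}^*}(-1)^{\Tr_1^n(\lambda u^t v^t)}.$$

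For the inner sum I would use that $\gcd(t,2^n-1)=1$ implies $\gcd(t,2^m-1)=1$, so $v\mapsto v^t$ permutes $\bF_{2^m}^*$; after this substitution, transitivity $\Tr_1^n=\Tr_1^m\circ\Tr_m^n$ together with $\Tr_m^n(\lambda u^t w)=w\,\Tr_m^n(\lambda u^t)$ for $w\in\bF_{2^m}$ collapses the inner sum to $\sum_{w\in\bF_{2^m}^*}(-1)^{\Tr_1^m(c_u w)}$, where $c_u=\Tr_m^n(\lambda u^t)$. This elementary additive-character sum equals $2^m-1$ when $c_u=0$ and $-1$ otherwise. Writing $S=\sum_{u\in U}(-1)^{f(u)}$ and $T(\lambda)=\sum_{u\in U,\,c_u=0}(-1)^{f(u)}$, I reach the clean identity
$$\widehat{\chi}_f(\lambda,t)=1-S+2^m\,T(\lambda).$$

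The two directions then follow from examining this identity. Taking $\lambda=0$ gives $c_u=0$ for every $u$, hence $T(0)=S$ and $\widehat{\chi}_f(0,t)=1+(2^m-1)S$; requiring this to lie in $\{\pm2^m\}$ with $S$ an integer forces $S=1$, giving the necessity of the stated condition. Conversely, assuming $S=1$, the identity reduces to $\widehat{\chi}_f(\lambda,t)=2^m\,T(\lambda)$, so it remains only to show $T(\lambda)=\pm1$ for every $\lambda\neq0$ and every admissible $t$.

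This last point is the crux. For $u\in U$ one has $u^{2^m}=u^{-1}$, so $c_u=\lambda u^t+\lambda^{2^m}u^{-t}$, and in characteristic two $c_u=0$ is equivalent to $u^{2t}=\lambda^{2^m-1}$. Since $(\lambda^{2^m-1})^{2^m+1}=\lambda^{2^n-1}=1$, the right-hand side lies in $U$; and because $2^m+1$ is odd and $\gcd(t,2^m+1)=1$, the map $u\mapsto u^{2t}$ is an automorphism of the cyclic group $U$. Hence $c_u=0$ has exactly one solution $u_0\in U$, the index set of $T(\lambda)$ is the singleton $\{u_0\}$, and $T(\lambda)=(-1)^{f(u_0)}=\pm1$. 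Therefore $\widehat{\chi}_f(\lambda,t)=\pm2^m$ for all $\lambda$ and all $t$ coprime to $2^n-1$, completing the sufficiency. I expect the main obstacle to be precisely this reduction of the solution set to a single point: it is where the hypothesis $\gcd(t,2^n-1)=1$ is genuinely used, and the fact that the count is one \emph{uniformly} in $t$ is exactly what makes bentness and hyper-bentness coincide for this class.
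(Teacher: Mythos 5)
Your proposal is correct and follows essentially the same route as the paper: decompose $\bF_{2^n}^*$ into the cosets $u\bF_{2^m}^*$, reduce the inner sum via trace transitivity to an additive character sum over $\bF_{2^m}^*$, and show that the relevant coefficient $\Tr_m^n(\lambda u^t)$ vanishes for exactly one $u\in U$ because $u\mapsto u^{2t}$ is a bijection of $U$. Your explicit treatment of $\lambda=0$ to pin down $S=1$ is a slightly cleaner way to close the necessity direction than the paper's sign analysis, but the substance is identical.
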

\begin{proof} Denote $q=2^m$. Since $\gamma^{q+1}$ is a primitive element of $\bF_q$, by (\ref{equ1}) we see that for each $u\in U$, the restriction of the function $f$ to $u\bF_q^*$ is constant.

Let $i$ be an integer coprime with $q^2-1$ and $a\in\bF_{q^2}^*$. Then we have
$$\sum_{x\in\bF_{q^2}^*}(-1)^{f(x)+\Tr^n_1(ax^i)}=\sum_{u\in U}(-1)^{f(u)}\sum_{x\in u\bF_q^*}(-1)^{\Tr^n_1(ax^i)}=\sum_{u\in U}(-1)^{f(u)}\sum_{y\in\bF_q^*}(-1)^{\Tr^n_1(au^iy^i)}.$$
For $a\in\bF_{q^2}^*,\;u\in U$ and $y\in\bF_q^*$, we have $$\Tr^n_1(au^iy^i)=\Tr^m_1(\Tr^n_m(au^iy^i))=\Tr^m_1(au^iy^i+a^qu^{qi}y^{qi}),$$
by transitivity of trace.
Since $u^q=u^{-1}$ and $y^q=y$, we have
$$\Tr^n_1(au^iy^i)=\Tr^m_1(au^iy^i+a^qu^{qi}y^{qi})=\Tr^m_1((au^i+a^qu^{-i})y^i).$$
Then $au^i+a^qu^{-i}=0$ if and only if $u=a^{t(q-1)}$, where $t$ is the multiplicative inverse of $2i$ modulo $q^2-1$.

Set $u_0=a^{t(q-1)}$. If $u=u_0$, then for any $y\in\bF_q^*$, we have $\Tr^n_1(au^iy^i)=0$, and hence
$$\sum_{y\in\bF_q^*}(-1)^{\Tr^n_1(au^iy^i)}=q-1.$$
If $u\in U\setminus\{u_0\}$, we have $au^i+a^qu^{-i}\ne0$. Then the set
$$\{y\in\bF_q^*:\Tr^m_1((au^i+a^qu^{-i})y^i)=0\}$$
has size $q/2-1$ and the set
$$\{y\in\bF_q^*:\Tr^m_1((au^i+a^qu^{-i})y^i)=1\}$$
has size $q/2$ as $y\mapsto y^i$ is a bijection. So we deduce that
$$\sum_{y\in\bF_q^*}(-1)^{\Tr^n_1(au^iy^i)}=1\times(q/2-1)+(-1)\times q/2=-1$$
for each $u\in U\setminus\{u_0\}$. Hence
$$\begin{aligned}\sum_{x\in\bF_{q^2}^*}(-1)^{f(x)+\Tr^n_1(ax^i)}&=\sum_{u\in U}(-1)^{f(u)}\sum_{y\in\bF_q^*}(-1)^{\Tr^n_1(au^iy^i)}\\
&=(q-1)\cdot(-1)^{f(u_0)}-\sum_{u\in U\setminus\{u_0\}}(-1)^{f(u)}\\
&=q\cdot(-1)^{f(u_0)}-\sum_{u\in U}(-1)^{f(u)}\\
&=\pm q-\sum_{u\in U}(-1)^{f(u)}.
\end{aligned}$$
Therefore, we have
$$\sum_{x\in\bF_{q^2}}(-1)^{f(x)+\Tr^n_1(ax^i)}=\pm q+1-\sum_{u\in U}(-1)^{f(u)}.$$
Then by the definition of a hyper-bent function we conclude that $f$ is a hyper-bent function if and only if
$$\sum_{u\in U}(-1)^{f(u)}=1,$$
which completes the proof of the proposition.
\end{proof}

\begin{proposition}\label{prop fg}
Let $f$ be a hyper-bent function in $\cP\cS_{ap}^{\#}$ with $f(0)=0$. Then there exists a function $g$ from $U$ to $\bF_2$ such that
$$f(x)=g(x^{2^m-1})$$
for $x\in\bF_{2^n}^*$, and
$$\sum_{u\in U}(-1)^{g(u)}=1.$$
Conversely, if $g$ is a function from $U$ to $\bF_2$ such that $\sum_{u\in U}(-1)^{g(u)}=1$, then the function $f$ from $\bF_{2^n}$ to $\bF_2$ such that $f(x)=g(x^{2^m-1})$ for $x\in\bF_{2^n}^*$ and $f(0)=0$ is a hyper-bent function from $\cP\cS_{ap}^{\#}$.
\end{proposition}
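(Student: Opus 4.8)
The plan is to exploit the power map $x \mapsto x^{2^m-1}$ as a bridge between $\bF_{2^n}^*$ and $U$. Writing $q=2^m$, observe that $\phi(x)=x^{q-1}$ is a group homomorphism on the cyclic group $\bF_{2^n}^*$ of order $q^2-1$, whose kernel is exactly $\bF_q^*$ (the set of solutions of $x^{q-1}=1$) and whose image therefore has order $(q^2-1)/(q-1)=q+1$; since $U$ is the unique subgroup of that order, $\phi$ maps $\bF_{2^n}^*$ onto $U$, and its fibres are precisely the cosets $u\bF_q^*$, $u\in U$. Two facts will then do all the work. First, by the opening remark in the proof of Proposition~\ref{psap}, condition (\ref{equ1}) forces $f$ to be constant on each coset $u\bF_q^*$, i.e.\ constant on each fibre of $\phi$. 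Second, for $u\in U$ one has $u^{q-1}=u^q u^{-1}=u^{-2}$ because $u^q=u^{-1}$, so $\phi$ sends the coset $u\bF_q^*$ not to $u$ but to $u^{-2}$; and since $U$ has odd order $q+1$, the map $u\mapsto u^{-2}$ is a bijection of $U$.

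For the direct implication, I would define $g\colon U\to\bF_2$ by $g(v)=f(x)$ for any $x$ with $x^{q-1}=v$; this is well defined precisely because $f$ is constant on the fibres of $\phi$, and it yields $f(x)=g(x^{q-1})$ for all $x\in\bF_{2^n}^*$ by construction. To get the sum condition I would reindex via the squaring bijection: evaluating $g$ at $u^{-2}=u^{q-1}$ gives $g(u^{-2})=f(u)$, so
$$\sum_{v\in U}(-1)^{g(v)}=\sum_{u\in U}(-1)^{g(u^{-2})}=\sum_{u\in U}(-1)^{f(u)}=1,$$
the last equality being the characterization in Proposition~\ref{psap} applied to the hyper-bent function $f$.

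For the converse, starting from $g$ with $\sum_{u\in U}(-1)^{g(u)}=1$ and $f(x)=g(x^{q-1})$ (and $f(0)=0$), I would first check that $f$ lies in the situation governed by Proposition~\ref{psap}: the identity $f(\gamma^{q+1}x)=g((\gamma^{q+1}x)^{q-1})=g(\gamma^{q^2-1}x^{q-1})=g(x^{q-1})=f(x)$ verifies (\ref{equ1}) since $\gamma^{q^2-1}=1$, while $f(0)=0$ holds by definition. It then remains to compute $\sum_{u\in U}(-1)^{f(u)}=\sum_{u\in U}(-1)^{g(u^{-2})}=\sum_{v\in U}(-1)^{g(v)}=1$, again by the squaring bijection, whence Proposition~\ref{psap} declares $f$ hyper-bent and in $\cP\cS_{ap}^{\#}$.

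The computations here are short; the only point that needs genuine care --- and the place I expect a careless argument to go wrong --- is keeping track of the twist introduced by the power map. One must recognize that $\phi$ sends $u\bF_q^*$ to $u^{-2}$ rather than to $u$, and then invoke the fact that squaring is a bijection on the odd-order group $U$, so that this twist merely permutes the summands of $\sum_{u}(-1)^{f(u)}$ and leaves its value unchanged. The well-definedness of $g$ --- equivalently, the observation that (\ref{equ1}) is exactly the statement that $f$ factors through $\phi$ --- is the other step I would state explicitly.
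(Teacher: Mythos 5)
Your proof is correct and follows essentially the same route as the paper: the paper defines $g(u)=f(u^s)$ with $s$ the inverse of $2^m-1$ modulo $2^m+1$, which is exactly the induced map on fibres you construct, and it likewise reindexes the sums using the fact that $u\mapsto u^{2^m-1}$ (your $u\mapsto u^{-2}$) is a bijection of $U$ because $\gcd(2^m-1,2^m+1)=1$. Your explicit identification of the twist as $u\mapsto u^{-2}$ on the odd-order group $U$ is just a more concrete phrasing of the same point.
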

\begin{proof}
First assume that $f$ is a hyper-bent function in $\cP\cS_{ap}^{\#}$ with $f(0)=0$. Take $s$ to be the multiplicative inverse of $2^m-1$ modulo $2^m+1$ and define $g:\;U\rightarrow\bF_2$ such that $g(u)=f(u^s)$ for each $u\in U$. For $x\in\bF_{2^n}^*$, take $u\in U$ and $y\in\bF_q^*$ such that $x=uy$. Then we have
$$f(x)=f(u)=f(u^{s(2^m-1)})=f((u^{2^m-1})^s)=g(u^{2^m-1})=g(x^{2^m-1}).$$
And we see that
$$\sum_{u\in U}(-1)^{g(u)}=\sum_{u\in U}(-1)^{f(u^s)}=\sum_{u\in U}(-1)^{f(u)}=1$$
by Proposition \ref{psap} and $\gcd(2^m+1,s)=1$.

Now assume that $g$ is a function from $U$ to $\bF_2$ such that $\sum_{u\in U}(-1)^{g(u)}=1$, and that $f$ is a Boolean function of $\bF_{2^n}$ such that $f(x)=g(x^{2^m-1})$ for $x\in\bF_{2^n}^*$ and $f(0)=0$. Notice that
$$f(\alpha^{q+1}x)=f(x)$$
where $\alpha$ is a primitive element of $\bF_{2^n}$, and that
$$\sum_{u\in U}(-1)^{f(u)}=\sum_{u\in U}(-1)^{g(u^{2^m-1})}=\sum_{u\in U}(-1)^{g(u)}=1$$
as $\gcd(2^m-1,2^m+1)=1$. By Proposition \ref{psap}, $f$ is a hyper-bent function from $\cP\cS_{ap}^{\#}$. The proof is now complete.
\end{proof}
\section{Constructions and enumeration of vectorial hyper-bent functions}

Let $n,m,\gamma$ and $U$ be as in Section 2.2. We have a the following characterization of vectorial hyper-bent functions in the context of group rings.
\begin{thm}\label{vhf} Let $n=2m$. Let $F(x)$ be a vectorial function from $\bF_{2^n}$ to $\bF_2^k$ such that $f(\gamma^ {2^m+1} x)=f(x)$ hold for each nonzero combination $f$ of the component functions of $F$ and $F(0)=0$. Then the following conditions are equivalent:
\\(1) $F$ is a vectorial hyper-bent function of dimension $k$.
\\(2) $\sum_{u\in U}(-1)^{\la v,F(u)\ra}=1$ for all $v\in\bF_2^k\setminus\{0\}$.
\\(3) $\sum_{u\in U}F(u)=2^{m-k}H+0_H$ holds in the group ring $\bZ[H]$, where $H$ is the additive group of $\bF_2^k$.
\end{thm}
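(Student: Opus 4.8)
The plan is to prove the cycle by first establishing $(1)\Leftrightarrow(2)$ directly from the definition of vectorial hyper-bentness together with Proposition \ref{psap}, and then to obtain $(2)\Leftrightarrow(3)$ by transferring the statement into the group ring $\bZ[H]$ and invoking the inversion formula of Proposition \ref{inv for}.

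For $(1)\Leftrightarrow(2)$ I would argue as follows. By definition $F$ is vectorial hyper-bent exactly when every nonzero combination $f=\la v,F\ra$, with $v\in\bF_2^k\setminus\{0\}$, is a hyper-bent Boolean function. Each such $f$ inherits the invariance $f(\gamma^{2^m+1}x)=f(x)$ from the hypothesis on $F$, and satisfies $f(0)=\la v,F(0)\ra=0$. Thus Proposition \ref{psap} applies to each $f$ individually, and $f$ is hyper-bent if and only if $\sum_{u\in U}(-1)^{f(u)}=\sum_{u\in U}(-1)^{\la v,F(u)\ra}=1$. Letting $v$ range over $\bF_2^k\setminus\{0\}$ gives precisely condition (2).

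For $(2)\Leftrightarrow(3)$ I would work with the element $A=\sum_{u\in U}F(u)\in\bZ[H]$, where each $F(u)\in\bF_2^k$ is regarded as a member of the additive group $H$. Writing $A=\sum_{h\in H}a_h\cdot h$, the coefficient $a_h$ simply counts the $u\in U$ with $F(u)=h$. By Proposition \ref{char} the characters of $H$ are $\chi_v(h)=(-1)^{\la v,h\ra}$, so $\chi_v(A)=\sum_{u\in U}(-1)^{\la v,F(u)\ra}$: for $v=0$ this is $|U|=2^m+1$ unconditionally, and for $v\ne0$ it equals $1$ exactly when (2) holds. On the target side I set $B=2^{m-k}H+0_H$ and compute its character sums using $\chi_v(H)=2^k$ for $v=0$ and $\chi_v(H)=0$ otherwise, together with $\chi_v(0_H)=1$; this gives $\chi_0(B)=2^m+1$ and $\chi_v(B)=1$ for $v\ne0$. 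Hence $\chi_v(A)=\chi_v(B)$ for all characters $v$ if and only if (2) holds, and Proposition \ref{inv for} then converts this character-wise identity into the equality $A=B$, which is condition (3).

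I do not anticipate a genuinely hard step; the content is a bookkeeping computation of character sums. The one point deserving care is the integrality of the right-hand side of (3), since a priori $2^{m-k}$ need not be an integer. I would resolve this by inverting explicitly: under (2) the formula of Proposition \ref{inv for} yields $a_0=2^{m-k}+1$ and $a_h=2^{m-k}$ for $h\ne0$, and because the $a_h$ are non-negative integer multiplicities this forces $m\ge k$, so $2^{m-k}\in\bZ$ and the identity in (3) is well-posed. This simultaneously recovers the familiar bound $k\le m$ for a vectorial hyper-bent function $\bF_{2^{2m}}\to\bF_2^k$.
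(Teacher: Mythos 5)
Your proposal is correct and follows essentially the same route as the paper: $(1)\Leftrightarrow(2)$ via Proposition \ref{psap} applied to each nonzero combination, and $(2)\Leftrightarrow(3)$ by comparing character sums of $\sum_{u\in U}F(u)$ and $2^{m-k}H+0_H$ and invoking the inversion formula of Proposition \ref{inv for}. Your closing remark on the integrality of $2^{m-k}$ (forcing $k\le m$) is a small point the paper leaves implicit, but it does not change the argument.
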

\begin{proof}
By the definition of a vectorial hyper-bent function, we deduced that $F$ is a vectorial hyper-bent function if and only if $\la v,F(u)\ra$ is bent for all $v\in\bF_2^k\setminus\{0\}$, which is equivalent to (2) by Proposition \ref{psap}. Hence (1) and (2) are equivalent.

Suppose $\sum_{u\in U}F(u)=2^{m-k}H+0_H$ holds. Taking an arbitrary non-principal character $\chi_v$ of $H$, we see that
$$\sum_{u\in U}(-1)^{\la v,F(u)\ra}=\sum_{u\in U}\chi_v(F(u))=\chi_v(2^{m-k}H+0_H)=1,$$
where $\chi_v(x)=(-1)^{\la v,x\ra}$.

Suppose $\sum_{u\in U}(-1)^{\la v,F(u)\ra}=1$ for all $v\in\bF_2^k\setminus\{0\}$. Then $\chi_v(\sum_{u\in U}F(u))=\chi_v(2^{m-k}H+0_H)=1$ for each non-principal character $\chi_v$. Also we have $\psi(\sum_{u\in U}F(u)=\psi(2^{m-k}H+0_H)=2^m+1$ where $\psi$ denotes the principal of $H$. Then by Proposition \ref{inv for} we conclude that (3) holds. Thus (2) and (3) are equivalent and the proof is now complete.
\end{proof}
Since each hyper-bent Boolean function of $\bF_{2^n}$ from $\cP\cS_{ap}^{\#}$ are of the form
$$f(x)=\Tr^n_1\left (\sum_{i=1}^{2^m}a_ix^{i(2^m-1)}+a_0 \right ),$$
any vectorial hyper-bent function from $\bF_{2^n}$ to $\bF_2^k$ satisfying the condition given in Theorem \ref{vhf}  has the following expression:
$$F(x)=\left (\Tr_1^n\left (\sum_{i=1}^{2^m}a_{1,i}x^{i(2^m-1)} +a_{1,0} \right ),\dots,\Tr_1^n \left (\sum_{i=1}^{2^m}a_{k,i}x^{i(2^m-1)} + a_{k,0}\right ) \right ).$$
Our next theorem counts the number of vectorial hyper-bent functions of this form.

\begin{thm}
Let $n=2m$. Let $\cN_{n,k}$ denote the number of vectorial hyper-bent functions of the form
$$F(x)=\left (\Tr_1^n\left (\sum_{i=1}^{2^m}a_{1,i}x^{i(2^m-1)} +a_{1,0} \right ),\dots,\Tr_1^n \left (\sum_{i=1}^{2^m}a_{k,i}x^{i(2^m-1)} + a_{k,0}\right ) \right ).$$
Then,
$$\cN_{n,k}=2^k\cdot\binom{2^m+1}{2^{m-k}+1}\cdot\prod_{i=1}^{2^k-1}\binom{2^m-i\cdot2^{m-k}}{2^{m-k}}.$$
\end{thm}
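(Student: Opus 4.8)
The plan is to reduce the enumeration to a purely combinatorial count of functions from $U$ into $H=\bF_2^k$ with prescribed fibre sizes. First I would record the structural fact underlying the claimed form: since each exponent $i(2^m-1)$ is a multiple of $2^m-1$ and $\gamma^{(2^m-1)(2^m+1)}=1$, every $F$ of the stated form automatically satisfies $f(\gamma^{2^m+1}x)=f(x)$ for each combination $f=\la v,F\ra$; conversely, by the argument of Carlet and Gaborit every Boolean function with this symmetry admits such a representation, and that argument does not use hyper-bentness. Hence the functions of the stated form are exactly the vectorial functions $F\colon\bF_{2^n}\to H$ with $F(\gamma^{2^m+1}x)=F(x)$, now with $F(0)$ \emph{unconstrained}: the constant terms $a_{j,0}$ contribute $\Tr_1^n(a_{j,0})$ to $F(0)$, and $\Tr_1^n$ is surjective. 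Using the partition $\bF_{2^n}^*=\bigcup_{u\in U}u\bF_{2^m}^*$, on each coset of which $f$ is constant, such an $F$ is completely determined by the pair $(c,G)$ with $c=F(0)\in H$ and $G=F|_U\colon U\to H$; distinct pairs give distinct functions, so it suffices to count the admissible pairs.

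Next I would characterise hyper-bentness of $F$ in terms of $(c,G)$. Repeating the computation of Proposition \ref{psap} but retaining the term $x=0$, whose contribution is $(-1)^{\la v,c\ra}$, one finds that $\la v,F\ra$ is hyper-bent if and only if $\sum_{u\in U}(-1)^{\la v,F(u)\ra}=(-1)^{\la v,c\ra}$; this is the mild generalisation of Theorem \ref{vhf} to $F(0)\neq0$. Passing to the group ring $\bZ[H]$ as in the proof of Theorem \ref{vhf} and applying Proposition \ref{inv for} (the principal character gives $|U|=2^m+1=\psi(2^{m-k}H+c)$), this is equivalent to $\sum_{u\in U}F(u)=2^{m-k}H+c$ in $\bZ[H]$. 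In terms of fibres this says precisely that $|G^{-1}(c)|=2^{m-k}+1$ while $|G^{-1}(h)|=2^{m-k}$ for every $h\in H\setminus\{c\}$; in particular it forces $k\le m$, and $\cN_{n,k}=0$ otherwise.

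Finally I would carry out the count. For a fixed $c\in H$ the admissible $G$ are exactly the functions $U\to H$ whose fibre over $c$ has size $2^{m-k}+1$ and whose remaining $2^k-1$ fibres each have size $2^{m-k}$; their number is the multinomial coefficient $(2^m+1)!/\bigl((2^{m-k}+1)!\,((2^{m-k})!)^{2^k-1}\bigr)$. Splitting this multinomial into a telescoping product of binomials, first choosing the fibre over $c$ and then filling the remaining $2^m-2^{m-k}$ elements one value at a time, gives $\binom{2^m+1}{2^{m-k}+1}\prod_{i=1}^{2^k-1}\binom{2^m-i\cdot2^{m-k}}{2^{m-k}}$, independently of $c$. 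Since $c$ ranges over the $2^k$ elements of $H$ and distinct values of $c$ yield disjoint families of functions $F$, summing over $c$ multiplies this by $2^k$, which is exactly the stated value of $\cN_{n,k}$.

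I expect the main obstacle to be the first paragraph: one must be certain that the stated trace form realises exactly the symmetric functions with $F(0)$ genuinely free, rather than only the hyper-bent ones or only those with $F(0)=0$. It is precisely this freedom that produces the factor $2^k$, so silently importing the $F(0)=0$ hypothesis of Theorem \ref{vhf} would undercount by a factor of $2^k$. Once the correspondence $F\leftrightarrow(c,G)$ and the fibre condition are in hand, the remainder is the routine multinomial bookkeeping of the last paragraph.
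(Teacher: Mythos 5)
Your proof is correct and follows essentially the same route as the paper: both reduce the count to functions $U\to\bF_2^k$ having one fibre of size $2^{m-k}+1$ and all other fibres of size $2^{m-k}$ (via the group-ring condition of Theorem \ref{vhf}), yielding the binomial product, and both extract the factor $2^k$ from the freedom in the constant term. The only cosmetic difference is that the paper gets the $2^k$ by translating solutions with $F(0)=0$ by constants $r\in\bF_{2^k}$, whereas you generalize the condition directly to $F(0)=c$; you are also more explicit than the paper about why the stated trace form realizes exactly the $\gamma^{2^m+1}$-invariant functions with free constant term.
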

\begin{proof}
We first consider the number $N$ of vectorial hyper-bent functions $F$ of the given form such that $F(0)=0$. By Theorem \ref{vhf} we deduce that
$$N=\binom{2^m+1}{2^{m-k}+1}\cdot\prod_{i=1}^{2^k-1}\binom{2^m-i\cdot2^{m-k}}{2^{m-k}}.$$
Since each vectorial hyper-bent function of the given form is a translation $F(x)+r,\;r\in\bF_{2^k}$, of a vectorial hyper-bent function $F(x)$ of the given form such that $F(0)=0$ and each such $F(x)$ has exactly $2^k$ translations, we obtain the desired result.
\end{proof}

Let $k$ be a positive integer. Given an ordered basis $A=(\alpha_1, \dots, \alpha_k)$ of
$\bF_{2^k}$ over $\bF_2$,  its dual basis is defined to be
a basis
$B=(\beta_1,\dots, \beta_k)$
satisfying
$$\mathrm{Tr}^k_1( \alpha_i \beta_j)=\delta_{i,j} \text{ for } i, j=1,2, \dots, k,$$
where $\delta_{i,j}$ denotes the Kronecker delta function. It is well known that each basis of $\bF_{2^k}$ over $\bF_2$ has a unique dual basis.

Let $n=2m$ and $k$ be an integer with $k|m$. Let us denote by $\mathcal{HB}_{n,k}$
the set of all the hyper-bent functions from $\bF_{2^n}$ to $\bF_{2}^k$, of the form
$$F(x)=\left (\Tr_1^n\left (\sum_{i=1}^{2^m}a_{1,i}x^{i(2^m-1)} +a_{1,0} \right ),\dots,\Tr_1^n \left (\sum_{i=1}^{2^m}a_{k,i}x^{i(2^m-1)} + a_{k,0}\right ) \right ),$$
where $a_{i,j} \in \bF_{2^n}$ for $i\in \{1, \dots,k\}$ and $j \in \{0, \dots, 2^m\}$.
Let $\widetilde{\mathcal{HB}}_{n,k}$ denote the set of all the hyper-bent functions from $\bF_{2^n}$ to $\bF_{2^k}$, of the form
$$\widetilde{F}(x)= \Tr_k^n\left (\sum_{i=1}^{2^m}b_{i}x^{i(2^m-1)} +b_{0} \right ),$$
where $b_{i} \in \bF_{2^n}$ for  $i \in \{0, \dots, 2^m\}$.
Let $A=(\alpha_1, \dots, \alpha_k)$
be a basis of $\bF_{2^k}$ over $\bF_2$ and
$B=(\beta_1,\dots,\beta_k)$
its dual basis.
We define a mapping $\pi$ from $\mathcal{HB}_{n,k}$ to $\widetilde{\mathcal{HB}}_{n,k}$ as follows: $\pi (f_1(x), \dots, f_k(x))=  \sum_{j=1}^k f_j(x) \alpha_j $, where
$(f_1(x), \dots, f_k(x)) \in \mathcal{HB}_{n,k}$.
And define a mapping $\sigma$ from $\widetilde{\mathcal{HB}}_{n,k}$  to $\mathcal{HB}_{n,k}$ as follows: $\sigma (\widetilde{F}(x))=
\left ( \mathrm{Tr}^k_1 (\beta_1 \widetilde{F}(x)), \dots, \mathrm{Tr}^k_1 (\beta_k \widetilde{F}(x)) \right )$, where
$\widetilde{F}(x) \in \widetilde{\mathcal{HB}}_{n,k}$. A trivial verification shows that
$\pi \sigma (\widetilde{F}(x))= \widetilde{F}(x)$ and $ \sigma\pi (f_1(x), \dots,$ $
 f_k(x))$
$=(f_1(x), \dots, f_k(x))$ for $\widetilde{F}(x) \in \widetilde{\mathcal{HB}}_{n,k}$ and
$(f_1(x), \dots, f_k(x)) \in \mathcal{HB}_{n,k}$. Consequently, the hyper-bent functions from $\widetilde{\mathcal{HB}}_{n,k}$ are exactly those
elements of $\mathcal{HB}_{n,k}$.

\begin{remark}

Let $n=2m$. The number of vectorial hyper-bent functions in $\widetilde{\mathcal{HB}}_{n,m}$ is counted in \cite{Muratovic2014}, so our result is a generalization of the case $k=m$.
\end{remark}

\begin{thm}\label{thm3}
Let $n=2m$ and $u_0\in U\setminus \{1\}$. Let $T_{u_0}$ be the vectorial function
defined on $\bF_{2^n}$ by
$$T_{u_0}(x)=\Tr_m^n \left (u_0\sum_{i=1}^{2^{m-1}}x^{i(2^m-1)} \right ).$$
Then $T_{u_0}$ is a vectorial hyper-bent function.
\end{thm}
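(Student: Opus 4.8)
The plan is to verify condition (2) of Theorem \ref{vhf}, taking $k=m$ and identifying the codomain $\bF_{2^m}$ with $\bF_2^m$ so that, by Proposition \ref{char}(2), the nonzero linear combinations of the components of $T_{u_0}$ are the maps $x\mapsto\Tr_1^m(v\,T_{u_0}(x))$ with $v\in\bF_{2^m}^*$. First I would record the two standing hypotheses of Theorem \ref{vhf}: each monomial $x^{i(2^m-1)}$ is invariant under $x\mapsto\gamma^{2^m+1}x$ because $(2^m+1)(2^m-1)=2^{2m}-1$ is the order of $\gamma$, so every component inherits this invariance, and clearly $T_{u_0}(0)=0$. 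It then remains to show $\sum_{u\in U}(-1)^{\Tr_1^m(v\,T_{u_0}(u))}=1$ for every $v\in\bF_{2^m}^*$. Using $v\in\bF_{2^m}$ and transitivity of the trace, I would rewrite the exponent as $\Tr_1^m(v\,\Tr_m^n(u_0 S(u)))=\Tr_1^n(v u_0 S(u))$, where $S(u)=\sum_{i=1}^{2^{m-1}}u^{-2i}$ is the restriction of $\sum_i x^{i(2^m-1)}$ to $U$, obtained from $u^{2^m-1}=u^{-2}$ for $u\in U$.

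The key computational step is to put $S(u)$ in closed form. Reducing the exponents $-2i$ modulo $2^m+1$ shows that $S(u)=u+u^3+\cdots+u^{2^m-1}$ is the sum of the odd powers; summing this geometric series and using $u^{2^m}=u^{-1}$ collapses it to $S(u)=1/(u+1)$ for $u\neq 1$, while $S(1)=2^{m-1}\cdot 1=0$ in characteristic $2$ (here I assume $m\ge 2$; the case $m=1$ is degenerate, the map being linear). Hence the $u=1$ term of the sum equals $1$, and the problem reduces to showing $\sum_{u\in U\setminus\{1\}}(-1)^{\Tr_1^n(vu_0/(u+1))}=0$.

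The decisive observation is that $u\mapsto w=1/(u+1)$ is a bijection from $U\setminus\{1\}$ onto the coset $\{w:\Tr_m^n(w)=1\}$: a direct check gives $w^{2^m}=u/(u+1)$, whence $w+w^{2^m}=1$, and both sets have cardinality $2^m$ with injectivity clear. Writing $\{w:\Tr_m^n(w)=1\}=w_0+\bF_{2^m}$ for a fixed $w_0$ of trace $1$ (note $\ker\Tr_m^n=\bF_{2^m}$), the remaining sum factors as $(-1)^{\Tr_1^n(vu_0 w_0)}\sum_{c\in\bF_{2^m}}(-1)^{\Tr_1^n(vu_0 c)}$. For $c\in\bF_{2^m}$ one has $\Tr_1^n(vu_0 c)=\Tr_1^m\bigl(c\,v\Tr_m^n(u_0)\bigr)$, so the inner sum is $2^m$ or $0$ according as $v\Tr_m^n(u_0)$ is zero or not. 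Since $U\cap\bF_{2^m}=\{1\}$ (as $\gcd(2^m+1,2^m-1)=1$), the hypothesis $u_0\neq 1$ is exactly the statement $u_0\notin\bF_{2^m}$, i.e. $\Tr_m^n(u_0)\neq 0$; combined with $v\neq 0$ this forces the inner sum to vanish. Thus the whole expression equals $1+0=1$, establishing condition (2).

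I expect the main obstacle to be the closed-form evaluation $S(u)=1/(u+1)$ together with the identification of $w=1/(u+1)$ as sweeping out exactly the trace-one coset of $\bF_{2^m}$; once these two facts are in place the vanishing of the character sum is routine, and the role of the hypothesis $u_0\neq 1$ (equivalently $\Tr_m^n(u_0)\neq 0$) enters precisely at the last step. As a consistency check, under the same identification condition (3) of Theorem \ref{vhf} with $k=m$ reads $\sum_{u\in U}T_{u_0}(u)=H+0_H$ in $\bZ[H]$ with $H=\bF_{2^m}$, i.e. $T_{u_0}$ restricted to $U$ attains each value of $\bF_{2^m}$ once except $0$, which is attained twice (at $u=1$ and at its bijective partner), matching the count $|U|=2^m+1$.
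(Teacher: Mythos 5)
Your proof is correct, and it reaches the conclusion by a genuinely different route than the paper. The paper also reduces to Theorem \ref{vhf} and sums the same geometric series (writing $\sum_{i=1}^{2^{m-1}}u^i=u\frac{1+u^{2^{m-1}}}{1+u}$), but its key step is an \emph{injectivity} argument: it squares $g(u)=\Tr_m^n\bigl(u_0\sum_i u^i\bigr)$ to get the closed form $\frac{u_0^2}{1+u^{-1}}+\frac{u_0^{-2}}{1+u}$, assumes $g(u_1)=g(u_2)$, and manipulates until it reaches $(u_0^2+u_0^{-2})(u_1+u_2)=0$, concluding that $g$ restricted to $U\setminus\{1\}$ is a bijection onto $\bF_{2^m}$; together with $g(1)=0$ this is exactly condition (3) of Theorem \ref{vhf}. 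You instead verify condition (2) directly: the decisive observation that $u\mapsto 1/(1+u)$ carries $U\setminus\{1\}$ bijectively onto the additive coset $\{w:\Tr_m^n(w)=1\}=w_0+\bF_{2^m}$ converts the character sum over $U\setminus\{1\}$ into a character sum over a coset of $\bF_{2^m}$, which vanishes by orthogonality once $\Tr_m^n(u_0)=u_0+u_0^{-1}\neq 0$. Your route is arguably more structural: it explains \emph{why} the map is balanced (on the coset $w_0+\bF_{2^m}$ the function becomes the affine bijection $c\mapsto\Tr_m^n(u_0w_0)+c(u_0+u_0^{-1})$ of $\bF_{2^m}$, which recovers the paper's bijectivity claim for free), whereas the paper's computation, while shorter to state, hides this. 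You also make explicit a point the paper glosses over: the identity $S(1)=2^{m-1}=0$ (equivalently the paper's ``$g(1)=0$'') requires $m\geq 2$; for $m=1$ the function is linear and the statement fails, so the implicit standing assumption $m\geq 2$ is worth recording.
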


\begin{proof}
Denote
$$g(u)=\Tr_m^n \left (u_0\sum_{i=1}^{2^{m-1}}u^i \right )\ \text{for}\ u\in U$$
and notice that $g(1)=0$. By Proposition \ref{prop fg}, it suffices to show that $g|_{U\setminus\{1\}}$ maps $U\setminus\{1\}$ onto $\bF_{2^m}$.
Suppose that there exist $u_1\ne u_2\in U\setminus\{1\}$ such that
\begin{equation}\label{3.3.1}
\Tr_m^n \left (u_0\sum_{i=1}^{2^{m-1}}u_1^i \right )=\Tr_m^n \left (u_0\sum_{i=1}^{2^{m-1}}u_2^i \right ).
\end{equation}
Now that
$$\begin{aligned}
\Tr_m^n\left (u_0\sum_{i=1}^{2^{m-1}}u_1^i \right )&=\Tr_m^n \left(u_0u_1\frac{1+u_1^{2^{m-1}}}{1+u_1} \right )\\
&=u_0u_1\frac{1+u_1^{2^{m-1}}}{1+u_1}+u_0^{2^m}u_1^{2^m}\frac{1+u_1^{2^{2m-1}}}{1+u_1^{2^m}}\\
&=u_0u_1\frac{1+u_1^{2^{m-1}}}{1+u_1}+u_0^{-1}\frac{1+u_1^{-2^{m-1}}}{1+u_1},
\end{aligned}$$
we compute that
$$\begin{aligned}
\left (\Tr_m^n \left (u_0\sum_{i=1}^{2^{m-1}}u_1^i \right ) \right )^2&=u_0^2u_1^2\frac{1+u_1^{-1}}{1+u_1^2}
+u_0^{-2}\frac{1+u_1}{1+u_1^2}\\
&=u_0^2\frac{1+u_1^{-1}}{1+u_1^{-2}}+u_0^{-2}\frac{1+u_1}{1+u_1^2}\\
&=\frac{u_0^2}{1+u_1^{-1}}+\frac{u_0^{-2}}{1+u_1}.
\end{aligned}$$
Similarly, we have
$$ \left(\Tr_m^n \left (u_0\sum_{i=1}^{2^{m-1}}u_2^i \right ) \right )^2=\frac{u_0^2}{1+u_2^{-1}}+\frac{u_0^{-2}}{1+u_2}.$$
Then (\ref{3.3.1}) implies that
$$\frac{u_0^2}{1+u_1^{-1}}+\frac{u_0^{-2}}{1+u_1}=\frac{u_0^2}{1+u_2^{-1}}+\frac{u_0^{-2}}{1+u_2},$$
that is,
$$\frac{u_0^2u_1+u_0^{-2}}{1+u_1}=\frac{u_0^2u_2+u_0^{-2}}{1+u_2}.$$
Then
$$(u_0^2u_1+u_0^{-2})(1+u_2)=(u_0^2u_2+u_0^{-2})(1+u_1).$$
It follows that
$$(u_0^2+u_0^{-2})(u_1+u_2)=0.$$
Since $u_0\ne1$, we have $u_0^2\ne u_0^{-2}$. Hence $u_1=u_2$, contradictory. Then $g|_{U\setminus\{1\}}$ is injective, and since $U\setminus\{1\}$ and $\bF_{2^m}$ are of equal size it is also surjective. The proof is now complete.
\end{proof}

A function $h$ from $\bF_{2^m}$ to $\bF_2^k$ is called \emph{balanced} if $\#\{x\in\bF_{2^m}:h(x)=b\}=2^{m-k}$ for any $b\in\bF_2^k$. We have the following straightforward construction of vectorial hyper-bent functions from $\bF_{2^{2m}}$ to $\bF_2^k$.
\begin{thm}\label{bal}
Let $T_{u_0}$ be defined as in Theorem \ref{thm3}. Let $h$ be a balanced function from $\bF_{2^m}$ to $\bF_2^k$ with $h(0)=0$. Then $h(T_{u_0}(x))$ is a vectorial hyper-bent function from $\bF_{2^n}$ to $\bF_2^k$.
\end{thm}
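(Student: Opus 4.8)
The plan is to apply the group-ring characterization in Theorem \ref{vhf} to the composite $F(x):=h(T_{u_0}(x))$, establishing its condition (3). First I would record the two hypotheses needed to invoke that theorem. Since $(\gamma^{2^m+1})^{i(2^m-1)}=\gamma^{i(2^{2m}-1)}=1$, we have $T_{u_0}(\gamma^{2^m+1}x)=T_{u_0}(x)$ for every $x$, hence $F(\gamma^{2^m+1}x)=F(x)$; this invariance passes to every $\bF_2$-linear combination of the component functions of $F$. Moreover $T_{u_0}(0)=\Tr_m^n(0)=0$, so $F(0)=h(0)=0$ by hypothesis. Thus Theorem \ref{vhf} applies, and it remains to compute $\sum_{u\in U}F(u)$ in $\bZ[H]$, where $H$ is the additive group of $\bF_2^k$.

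The core of the argument is to evaluate this group-ring element. Writing $q=2^m$ and recalling that $y^{q-1}=1$ for $y\in\bF_q^*$, one checks that for $x\in\bF_{2^n}^*$ written as $x=uy$ with $u\in U$ and $y\in\bF_q^*$ one has $x^{2^m-1}=u^{2^m-1}$, so that $T_{u_0}(x)=g(x^{2^m-1})$, where $g(u)=\Tr_m^n(u_0\sum_{i=1}^{2^{m-1}}u^i)$ is precisely the function analyzed in the proof of Theorem \ref{thm3}. In particular $T_{u_0}(u)=g(u^{2^m-1})$ for $u\in U$. Because $\gcd(2^m-1,2^m+1)=1$, the map $u\mapsto u^{2^m-1}$ permutes $U$, so reindexing gives
$$\sum_{u\in U}F(u)=\sum_{u\in U}h(g(u^{2^m-1}))=\sum_{w\in U}h(g(w)).$$

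Next I would isolate the term $w=1$ and use the structural fact established inside the proof of Theorem \ref{thm3}, namely that $g(1)=0$ and $g$ restricts to a bijection from $U\setminus\{1\}$ onto $\bF_{2^m}$. Hence
$$\sum_{w\in U}h(g(w))=h(g(1))+\sum_{w\in U\setminus\{1\}}h(g(w))=h(0)+\sum_{t\in\bF_{2^m}}h(t).$$
Since $h(0)=0$, the first summand is the identity $0_H$ of $H$. Since $h$ is balanced, each value $b\in\bF_2^k$ is attained exactly $2^{m-k}$ times, so $\sum_{t\in\bF_{2^m}}h(t)=2^{m-k}\sum_{b\in\bF_2^k}b=2^{m-k}H$ in $\bZ[H]$. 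Combining these, $\sum_{u\in U}F(u)=2^{m-k}H+0_H$, which is exactly condition (3) of Theorem \ref{vhf}; therefore $F=h\circ T_{u_0}$ is vectorial hyper-bent.

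The delicate points here are bookkeeping rather than substance: one must verify that the two permutations of $U$ (the exponentiation $u\mapsto u^{2^m-1}$ and the bijection $g|_{U\setminus\{1\}}$) are genuinely bijective in the stated ranges, and one must keep the exceptional point $u=1$ separate so that the identity contribution $h(0)=0_H$ is counted once while the remaining $2^m$ points contribute exactly one balanced copy of $\sum_{t\in\bF_{2^m}}h(t)$. I expect the main conceptual step to be recognizing that the identity $T_{u_0}(u)=g(u^{2^m-1})$ lets the balancedness of $h$ interact cleanly with the bijection from Theorem \ref{thm3}, turning the group-ring sum into $2^{m-k}H+0_H$ with no further computation.
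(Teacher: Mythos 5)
Your proposal is correct and follows exactly the route the paper intends: its proof of this theorem is the single line ``It follows directly from Theorem \ref{vhf} and Theorem \ref{thm3}'', and what you have written is precisely the omitted verification --- invoking condition (3) of Theorem \ref{vhf}, using the bijection $g|_{U\setminus\{1\}}\colon U\setminus\{1\}\to\bF_{2^m}$ with $g(1)=0$ established in the proof of Theorem \ref{thm3}, and letting the balancedness of $h$ produce $2^{m-k}H+0_H$. No discrepancies; your expanded bookkeeping (the reindexing via $u\mapsto u^{2^m-1}$ and the separate treatment of $u=1$) is exactly what the paper's terse proof presupposes.
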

\begin{proof}
It follows directly from Theorem \ref{vhf} and Theorem \ref{thm3}.
\end{proof}

Finally
we present some infinite classes of hyper-bent functions employing permutation polynomials and binary m-sequences.

A polynomial $h(X) \in \bF_{2^m}[X]$ is called a permutation polynomial (PP) of $\bF_{2^m}$ if the
associated polynomial function $h: x \mapsto f(x)$ from $\bF_{2^m}$ to itself is a permutation of $\bF_{2^m}$. A very important class of polynomials whose
permutation behavior is well understood is the class of Dickson polynomials, which we
will define below. We recall that the $r$-th binary Dickson polynomial $D_{r}(x)\in \bF_{2} [x]$ is defined by
\[D_r(x)=\sum_{i=0}^{\lfloor r/2 \rfloor} \frac{r}{r-i} \binom{r-i}{i} x^{r-2i},\]
where $\lfloor r/2 \rfloor$ denotes the largest integer less than or equal to $r/2$.
For $r=0$, we set $D_0(x) =0$. The first eight
binary Dickson polynomials are
\begin{eqnarray*}
\begin{array}{c}
	D_0(x)=0, ~~D_1(x)=x,~~D_2(x)=x^2,~~D_3(x)=x^3+x,
	~~D_4(x)=x^4,\\
	D_5(x)=x^5+x^3+x,~~D_6(x)=x^6+x^2,~~D_7(x)=x^7+x^5+x.		
\end{array}
\end{eqnarray*}

We write $x =\frac{1}{y}$ with $y\neq 0$ an indeterminate. Then binary Dickson polynomials can often
be rewritten (also referred as functional expression) as
\[D_r(x)=D_r\left (y+\frac{1}{y}\right )=y^r+\frac{1}{y^r}.\]
For any non-zero
positive integers $r$ and $s$, Dickson polynomials satisfy:
\[D_r(D_s(x))=D_{rs}(x).\]

The PPs among the Dickson polynomials have been completely
classified. We state the following theorem due to N\"obauer \cite{Noba}.
Dickson in his 1896 Ph. D. thesis observed and partially proved the theorem.
\begin{thm}\label{Dickson}
The Dickson polynomial $D_r(x)$ is a permutation polynomial
of $\bF_{2^m}$ if and only if $\mathrm{gcd}(r,2^{2m}-1)=1$.
\end{thm}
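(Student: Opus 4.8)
The plan is to work entirely through the functional expression $D_r(y+y^{-1}) = y^r + y^{-r}$ recalled above, combined with a two-to-one parametrization of $\bF_{2^m}$ by the group $\bF_{2^m}^*\cup U$, where (exactly as in Section 2.2) $U$ is the cyclic subgroup of $\bF_{2^{2m}}^*$ of order $2^m+1$. Write $q=2^m$. For $x\in\bF_q$ the equation $y+y^{-1}=x$ is $y^2+xy+1=0$, whose two roots $y,y^{-1}$ have product $1$; hence either both lie in $\bF_q^*$, or they are Frobenius-conjugate over $\bF_q$, in which case $y^{q+1}=y\cdot y^q=1$ and $y\in U$. Thus every $x\in\bF_q$ equals $y+y^{-1}$ for some $y\in\bF_q^*\cup U$. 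Since $q$ is even, $q-1$ and $q+1$ are odd and coprime, so $\bF_q^*\cap U=\{1\}$ and the inversion $y\mapsto y^{-1}$ has the single fixed point $y=1$ on $\bF_q^*\cup U$. Counting $(q-1)+(q+1)-1=2q-1$ points, paired by inversion except for $y=1$, gives exactly $q$ distinct values $y+y^{-1}$, so this parametrization is onto $\bF_q$ and is exactly two-to-one away from $x=0$.

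First I would note that raising to the $r$-th power preserves each of the groups $\bF_q^*$ and $U$, so $D_r(y+y^{-1})=y^r+y^{-r}$ is again a value of the same parametrization. Using the exactness just established, the identity $D_r(x_1)=D_r(x_2)$ (with $x_i=y_i+y_i^{-1}$) is equivalent to the equality of unordered pairs $\{y_1^r,y_1^{-r}\}=\{y_2^r,y_2^{-r}\}$. This reduces the permutation property of $D_r$ to the assertion that $y\mapsto y^r$ is injective modulo inversion on $\bF_q^*\cup U$. For the "if" direction, assuming $\gcd(r,q^2-1)=1$ gives $\gcd(r,q-1)=\gcd(r,q+1)=1$, whence $y\mapsto y^r$ is a bijection on each cyclic group $\bF_q^*$ and $U$ (a power map on a cyclic group of order $N$ is bijective iff the exponent is prime to $N$). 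As the two groups meet only in $\{1\}$, any coincidence $\{y_1^r,y_1^{-r}\}=\{y_2^r,y_2^{-r}\}$ forces $y_1=y_2^{\pm1}$ and hence $x_1=x_2$, so $D_r$ permutes $\bF_q$.

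For the "only if" direction I would argue by contraposition: suppose $\gcd(r,q^2-1)>1$. Because $q-1$ and $q+1$ are coprime, $r$ shares a nontrivial factor with exactly one of them; say $\gcd(r,q-1)=d>1$ (the case of $U$ is verbatim the same, writing $y+y^{-1}=y+y^{q}\in\bF_q$ for $y\in U$). The delicate point — and the step I expect to be the main obstacle — is that a collision $y_1^r=y_2^r$ with $y_1\neq y_2$ need \emph{not} descend to a collision of $D_r$, since $y_1=y_2^{-1}$ would give back $x_1=x_2$; so the folding by inversion must be handled carefully in this direction too. I would sidestep this by working inside the kernel $K=\{y\in\bF_q^*:y^r=1\}$, a subgroup of order $d>1$: choosing any $y_1\in K$ with $y_1\neq1$ and setting $y_2=1$, one has $x_2=1+1=0$ while $x_1=y_1+y_1^{-1}\neq0$ (in characteristic $2$, $y_1+y_1^{-1}=0$ forces $y_1^2=1$, i.e. $y_1=1$), yet $D_r(x_1)=y_1^r+y_1^{-r}=1+1=0=D_r(x_2)$. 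This is a genuine collision with $x_1\neq x_2$, so $D_r$ is not a permutation. Collecting both directions, $D_r$ permutes $\bF_{2^m}$ if and only if $\gcd(r,q-1)=\gcd(r,q+1)=1$, equivalently $\gcd(r,2^{2m}-1)=1$, which is the claim.
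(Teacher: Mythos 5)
The paper does not prove this statement at all: Theorem \ref{Dickson} is quoted as a classical result and attributed to N\"obauer \cite{Noba}, so there is no in-paper argument to compare against. Your proof is a correct, self-contained version of the standard argument: the two-to-one parametrization of $\bF_q$ by $y\mapsto y+y^{-1}$ on $\bF_q^*\cup U$, the functional equation $D_r(y+y^{-1})=y^r+y^{-r}$, the reduction of injectivity of $D_r$ to injectivity of the $r$-th power map modulo inversion, and the explicit collision $D_r(y_1+y_1^{-1})=0=D_r(0)$ built from a nontrivial element of the kernel of the power map in the contrapositive direction. All the delicate points are handled: you correctly observe that $\bF_q^*\cap U=\{1\}$ so that the power map is a bijection of $\bF_q^*\cup U$ commuting with inversion in the ``if'' direction, and in the ``only if'' direction you avoid the folding issue by colliding with the inversion-fixed point $y=1$, using that $y_1+y_1^{-1}=0$ forces $y_1=1$ in characteristic $2$. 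One harmless slip: when $\gcd(r,q^2-1)>1$ you say $r$ shares a nontrivial factor with \emph{exactly} one of $q-1$, $q+1$; it could share factors with both (e.g.\ $r=(q-1)(q+1)$), and the correct statement is ``at least one,'' which is all your argument needs. With that wording fixed, the proof is complete and would serve as a legitimate substitute for the citation.
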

Combining Theorem \ref{bal} with Theorem \ref{Dickson} gives the following construction of vectorial hyper-bent functions from Dickson polynomials.
\begin{cor}
Let $n=2m$ and let $r$ be a positive integer such that $\mathrm{gcd}(r,2^{2m}-1)=1$. Let $T_{u_0}$ be defined as in Theorem \ref{thm3} and let $D_r(x)$ be the $r$-th binary Dickson polynomial. Then $D_r(T_{u_0}(x))$ is a vectorial hyper-bent function from $\bF_{2^n}$ to $\bF_{2^m}$.
\end{cor}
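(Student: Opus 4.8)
The plan is to obtain the corollary as an immediate application of Theorem \ref{bal}, with the Dickson polynomial $D_r$ playing the role of the balanced function $h$ and with $k=m$. The only points I need to verify are that $D_r$ genuinely satisfies the hypotheses of Theorem \ref{bal}: that it is a balanced function from $\bF_{2^m}$ to $\bF_{2^m}=\bF_2^m$, and that $D_r(0)=0$.

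First I would invoke Theorem \ref{Dickson}. Since $\gcd(r,2^{2m}-1)=1$ by assumption, Theorem \ref{Dickson} tells us that $D_r$ is a permutation polynomial of $\bF_{2^m}$, so the induced map $x\mapsto D_r(x)$ is a bijection of $\bF_{2^m}$. Identifying $\bF_{2^m}$ with the vector space $\bF_2^m$, this bijection is exactly a balanced function in the sense defined just before Theorem \ref{bal}: for $k=m$ the defining count reads $\#\{x\in\bF_{2^m}:D_r(x)=b\}=2^{m-m}=1$ for every $b$, which is precisely the statement that $D_r$ attains each value once, i.e.\ that it is a permutation. Hence the balancedness hypothesis holds automatically.

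Next I would check the normalization $D_r(0)=0$. Evaluating the defining sum $D_r(x)=\sum_{i=0}^{\lfloor r/2\rfloor}\frac{r}{r-i}\binom{r-i}{i}x^{r-2i}$ at $x=0$, the only term that can survive is a constant one, which occurs precisely when $r-2i=0$; this requires $r$ even and $i=r/2$, and its coefficient is $\frac{r}{r-r/2}\binom{r/2}{r/2}=2\equiv 0\pmod 2$. For odd $r$ the lowest exponent is $r-2\lfloor r/2\rfloor=1$, so no constant term appears at all. In either case $D_r(0)=0$ over $\bF_2$, consistent with the list of small Dickson polynomials displayed above.

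With both hypotheses verified, I would apply Theorem \ref{bal} with $h=D_r$ and $k=m$, together with the definition of $T_{u_0}$ from Theorem \ref{thm3}, to conclude that $D_r(T_{u_0}(x))$ is a vectorial hyper-bent function from $\bF_{2^n}$ to $\bF_{2^m}$. I do not anticipate any genuine obstacle: the argument is a clean composition of two already-established results, and the single nontrivial verification—confirming that the Dickson polynomial fixes $0$—is a one-line inspection of its constant term. The mathematical content of the corollary resides entirely in Theorems \ref{bal} and \ref{Dickson}; the statement merely specializes the balanced map to the concrete and well-understood Dickson family.
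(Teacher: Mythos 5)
Your proposal is correct and matches the paper exactly: the paper offers no separate proof, stating only that the corollary follows by combining Theorem \ref{bal} with Theorem \ref{Dickson}, which is precisely the specialization you carry out (with the worthwhile extra check that $D_r(0)=0$ in characteristic $2$, even for even $r$).
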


Any permutation binomial or  permutation trinomial proposed in \cite{LQC} can be plugged into Theorem \ref{bal} to obtain a vectorial hyper-bent function from $\bF_{2^n}$ to $\bF_{2^m}$.

The binary m-sequences of period $2^m-1$ are the sequences of elements in $\bF_2$ of the form
$\left \{\mathrm{Tr}^{m}_1 \left ( \gamma^{di+t} \right ) \right \}_{i \in \mathbb Z}$
where $\gamma$ is a generator of $\bF_{2^m}^*$, $t$ is an integer, and the {\em decimation} $d$ has $\mathrm{gcd}(d,2^m-1)=1$. The crosscorrelation $C_d(t)$ between the two m-sequences $\left \{\mathrm{Tr}^{m}_1 \left ( \gamma^{i} \right ) \right \}_{i \in \mathbb Z}$ and $\left \{\mathrm{Tr}^{m}_1 \left ( \gamma^{di} \right ) \right \}_{i \in \mathbb Z}$
 can be described by the following exponential sum by
using the trace function representation
\begin{eqnarray*}
	\begin{array}{rl}
		C_d(t)&=\sum_{i=0}^{2^m-2} (-1)^{ \mathrm{Tr} \left( \gamma^{t+i} +\gamma^{di} \right )}\\
		&=\sum_{x\in \bF_{2^m}^*} (-1)^{ \mathrm{Tr} \left ( x^d+cx \right )},
	\end{array}
\end{eqnarray*}
where $c=\gamma^t$.  We say that $C_d(t)$ is $v$-valued to mean that $\# \left \{ C_d(t): t \in \mathbb Z\right \} =v$. It was shown by Katz
\cite{katz} that  $C_d(t)$ always takes on
$-1$ as one of the values if the crosscorrelation function $C_d(t)$ is three-valued. Thus,
we have the following construction of hyper-bent functions from three-valued m-sequences.
\begin{cor}
Let $n = 2m$ and $u_0 \in U \setminus \{1\}$. Let $d$ be an arbitrary positive integer such that $\mathrm{gcd}(d,2^m-1)=1$
and $C_d(t)$ is three-valued.  Then there always exists $\lambda \in \bF_{2^m}^{*}$ such that $\mathrm{Tr}^{m}_1\left (  \left ( \Tr_m^n \left (u_0\sum_{i=1}^{2^{m-1}}x^{i(2^m-1)} \right ) \right )^d\right  ) +\Tr_1^n \left (\lambda u_0\sum_{i=1}^{2^{m-1}}x^{i(2^m-1)} \right )$ is a hyper-bent function.
\end{cor}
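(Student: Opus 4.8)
The plan is to recognize the displayed Boolean function as a composition $h\circ T_{u_0}$ and then invoke Proposition \ref{psap}. Writing $S(x)=u_0\sum_{i=1}^{2^{m-1}}x^{i(2^m-1)}$, so that $T_{u_0}(x)=\Tr_m^n(S(x))$, I would first apply transitivity of the trace to the second summand: since $\lambda\in\bF_{2^m}$ is fixed by $x\mapsto x^{2^m}$, one has $\Tr_1^n(\lambda S(x))=\Tr_1^m(\Tr_m^n(\lambda S(x)))=\Tr_1^m(\lambda T_{u_0}(x))$. Combining with the first summand, the whole function becomes
$$F(x)=\Tr_1^m\left(T_{u_0}(x)^d+\lambda T_{u_0}(x)\right)=h(T_{u_0}(x)),\qquad h(y):=\Tr_1^m(y^d+\lambda y).$$
Because $T_{u_0}$ is $\gamma^{2^m+1}$-invariant with $T_{u_0}(0)=0$, the same holds for $F$, and $F(0)=h(0)=0$. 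Hence by Proposition \ref{psap} it suffices to exhibit $\lambda\in\bF_{2^m}^*$ for which $\sum_{u\in U}(-1)^{F(u)}=1$.

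The key step is to evaluate $\sum_{u\in U}(-1)^{h(g(u))}$, where $g=T_{u_0}|_U$, using the value distribution of $g$ established in the proof of Theorem \ref{thm3}: there $g(1)=0$ and $g$ restricts to a bijection from $U\setminus\{1\}$ onto $\bF_{2^m}$. Equivalently, Theorem \ref{vhf}(3) with $k=m$ gives $\sum_{u\in U}T_{u_0}(u)=H+0_H$ in $\bZ[H]$, so that as a multiset $\{g(u):u\in U\}$ is all of $\bF_{2^m}$, each element once, together with one extra copy of $0$. Consequently
$$\sum_{u\in U}(-1)^{h(g(u))}=\sum_{y\in\bF_{2^m}}(-1)^{h(y)}+(-1)^{h(0)}=\sum_{y\in\bF_{2^m}}(-1)^{h(y)}+1.$$
Splitting off $y=0$ and recalling the trace description of the crosscorrelation, $\sum_{y\in\bF_{2^m}}(-1)^{\Tr_1^m(y^d+\lambda y)}=1+C_d(t)$ with $\lambda=\gamma^t$, and therefore $\sum_{u\in U}(-1)^{F(u)}=2+C_d(t)$.

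It then remains to pick $\lambda$ so that $2+C_d(t)=1$, that is, $C_d(t)=-1$. Since $C_d(t)$ is assumed three-valued, the result of Katz \cite{katz} guarantees that $-1$ is one of its values, so some $t$ yields $C_d(t)=-1$; taking $\lambda=\gamma^t\in\bF_{2^m}^*$ forces $\sum_{u\in U}(-1)^{F(u)}=1$, and Proposition \ref{psap} delivers hyper-bentness. The only genuinely delicate point is the bookkeeping of the value distribution: one must correctly track the extra $0$ recorded by the term $0_H$ in Theorem \ref{vhf}(3), since that is precisely what turns a plain sum over $\bF_{2^m}$ into the shifted quantity $2+C_d(t)$. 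Everything else reduces to transitivity of the trace and a direct appeal to the cited crosscorrelation fact.
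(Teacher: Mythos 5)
Your proof is correct and is essentially the argument the paper intends: the paper leaves the corollary as an immediate consequence of Katz's result together with the value distribution of $T_{u_0}$ on $U$ (equivalently, Theorem \ref{bal} applied to the function $h(y)=\Tr_1^m(y^d+\lambda y)$, which is balanced precisely when $C_d(t)=-1$). Your direct route through Proposition \ref{psap}, tracking the extra $0_H$ to get $\sum_{u\in U}(-1)^{F(u)}=2+C_d(t)$, is the same computation made explicit.
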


For binary m-sequences of length $2^m-1$, the following is a complete list of all
decimations known to give three-valued crosscorrelation. It is a challenging and open
problem to decide whether this list is complete.
\begin{enumerate}[label=(\roman*)]
	\item  Gold \cite{gold}: $d=2^k+1$, $\frac{m}{\mathrm{gcd}(k,m)}$ odd.
	\item Kasami \cite{Kasami}: $d=2^{2k}-2^k+1$, $\frac{m}{\mathrm{gcd}(k,m)}$ odd.
	\item Cusick and Dobbertin \cite{Cusick and Dobbertin}: $d=2^{\frac{m}{2}}+2^{\frac{m+2}{4}}+1$, $m\equiv 2 \pmod 4$.
	\item Cusick and Dobbertin \cite{Cusick and Dobbertin}: $d=2^{\frac{m+2}{2}}+3$, $m\equiv 2 \pmod 4$.
	\item Canteaut, Charpin and Dobbertin \cite{Charpin and Dobbertin}: $d=2^{\frac{m-1}{2}}+3$, $m$ odd.
	\item Dobbertin \cite{Dobbertin}, Hollmann and Xiang \cite{Hollmann and Xiang}:
	\begin{eqnarray*}
	d= \left \{ \begin{array}{lr}
		2^{\frac{m-1}{2}}+2^{\frac{m-1}{4}}-1,& m\equiv 1 \pmod 4 \\
		2^{\frac{m-1}{2}}+2^{\frac{3m-1}{4}}-1,& m\equiv 3 \pmod 4
		\end{array} \right . .
	\end{eqnarray*}
\end{enumerate}

\section{Conclusion}
In this paper we are devoted to deducing a sufficient and necessary condition of vectorial hyper-bent functions which is a generalization for case $k=n/2$ of Theorem 1 in \cite{Muratovic2014}. We also get a numerical result for the number $\cN_{n,k}$ of vectorial hyper-bent functions of the form
$$F(x)=\left (\Tr_1^n \left (\sum_{i=1}^{2^m}a_{1,i}x^{i(2^m-1)} +a_{1,0} \right ),\dots,\Tr_1^n \left (\sum_{i=1}^{2^m}a_{k,i}x^{i(2^m-1)} +a_{k,0} \right ) \right ),$$
which generalizes the result of Theorem 4 in \cite{Muratovic2014}. By Theorem \ref{bal}, the problem of searching for vectorial hyper-bent functions $\bF_{2^n}$ to $\bF_2^k$ is reduced to the one of finding balanced functions from $\bF_{2^n}$ to $\bF_2^k$.

\noindent\textbf{Acknowledgement.} This work was supported by National Natural Science Foundation of China under Grant Nos. 12171428, 12231015 and 61972400.
%The authors thank the reviewers for careful reading and suggestions, which improved the presentation of this paper.\\

\begin{center}
	\scriptsize
	\setlength{\bibsep}{0.5ex}  % vertical spacing between references
		\linespread{0.5}
	%\bibliography{PGL}
	\bibliographystyle{plain}

\end{center}
\end{document}